\theoremstyle{plain}
\newtheorem{rem}{Remark}
\newtheorem*{mt*}{Main Theorem}
\theoremstyle{plain}
\newtheorem{theorem}{Theorem}[section]
\newcommand\e{{\varepsilon}}
\newcommand\pa[1]{\partial_{#1}}
\title{Ricci solitons of special Lorentzian Lie groups with a four-dimensional isometry group}
 \author{Giovanni Calvaruso}
\address{Giovanni Calvaruso: Dipartimento di Matematica e Fisica \lq\lq E. De Giorgi\rq\rq \\
Universit\`a del Salento\\
Prov. Lecce-Arnesano \\
73100 Lecce\\ Italy.}
\email{giovanni.calvaruso@unisalento.it}
\author{Lorenzo Pellegrino}
\address{Lorenzo Pellegrino: Dipartimento di Matematica e Fisica \lq\lq E. De Giorgi\rq\rq \\
Universit\`a del Salento\\
Prov. Lecce-Arnesano \\
73100 Lecce\\ Italy.}
\email{lorenzo.pellegrino@unisalento.it}
\author{Amirhesam Zaeim}
\address{Amirhesam Zaeim: Department of Mathematics\\
 Payame Noor University (PNU)\\
 P.O. Box 19395-4697, Tehran\\
  Iran}
  \email{zaeim@pnu.ac.ir}
\subjclass[2020]{53C50, 53C25}
\keywords{Lorentzian Lie groups,  isometry groups, Ricci solitons}
\begin{document}


\maketitle

\begin{abstract}
In the framework of the study of homogeneous Lorentzian three-manifolds, we consider here the only class of examples which admit a four-dimensional group of isometries but are neither Lorentzian Bianchi-Cartan-Vranceanu spaces nor plane waves. We  obtain an explicit description in global coordinates of these special homogeneous Lorentzian manifolds. We then prove that all such examples are non-gradient expanding Ricci solitons.
\end{abstract}

\section{Introduction}
\setcounter{equation}{0}

The study of homogeneous  three-manifolds is a relevant topic, which attracted the interest of several researchers with regard to many different aspects of their geometric properties. A  three-dimensional  nondegenerate metric is (up to reversing it \cite{O'N}) either Riemannian or Lorentzian. Usually, the Lorentzian settings allow to a wider spectrum of properties and behaviours, that have not  any Riemannian counterpart. A non-exhaustive list of properties, with respect to which homogeneous Lorentzian three-manifolds show a geometry richer than the Riemannian ones, is given by:  Einstein-like metrics \cite{AGV,Cgd, Ciran}, homogeneous Ricci solitons \cite{Cerbo,isr, Corr}, homogeneous pseudo-Riemannian structures  \cite{CFGV,CZ}.

It is well known that the isometry group $\text{Iso}(M,g)$  of  a three-dimensional pseudo-Riemannian manifold  $(M,g)$ is at most six-dimensional. The case where
$\mathrm{dim}(\text{Iso}(M,g))= 6$ characterizes manifolds of constant sectional  curvature. Moreover, there exist no  three-dim\-ensional pseudo-Riemannian manifolds with a five-dimensional isometry group. It is then natural to consider three-dimensional pseudo-Riemannian manifolds $(M,g)$ admitting a four-dimensional isometry group. Observe that in these cases, $\text{Iso}(M,g)$ acts transitively and so, $(M,g)$ is a homogeneous pseudo-Riemannian manifold.

As it often occurs, the Lorentzian case is richer and subtler with respect to its Riemannian analogue.
In fact, {\em Bianchi-Cartan-Vranceanu spaces}  (BCV spaces, for short) \cite{B1,B2,C,V} exhaust the description of Riemannian three-manifolds with a four-dimensional isometry group, but there are different classes of homogeneous Lorentzian three-manifolds with such a property, as it can be seen by comparing the classification of (non-isometric) homogeneous structures in the Riemannian \cite{CFGV} and the Lorentzian case \cite{CZ}.  More precisely, following \cite{CZ}, three-dimensional Lorentzian Lie groups admitting a four-dimensional isometry group, fall between one of the following not overlapping classes, corresponding to different possibilities for the self-adjoint structure operator $L$ of their Lie algebra:
\begin{itemize}{}
\item[(I)] {\em Lorentzian Bianchi-Cartan-Vranceanu spaces}, for which $L$ is diagonal. They have a structure similar to Riemannian BCV (although the Lorentzian class contains more cases than its Riemannian analogue). Lorentzian BCV spaces possess several remarkable geometric properties. In particular, they are naturally reductive spaces and are defined by submersions over a pseudo-Riemannian surface of constant curvature \cite{CP1}.

\item[(II)] an  {\em exceptional example},  for which the minimal polynomial of $L$ has a non-vanishing  double root.  At the Lie algebra level, this Lorentzian Lie group
 is described by
\begin{equation}\label{lie}
[u_1,u_2]=\mu u_3, \quad [u_2,u_3]=\mu u_2,\quad [u_3,u_1]=\mu u_1 +\varepsilon u_2, \qquad \mu \neq 0, \;  \varepsilon=\pm 1,
\end{equation}
where $\{u_1,u_2,u_3\}$ is a pseudo-orthonormal basis with
\begin{equation}\label{glie}
\langle u_1, u_2 \rangle = \langle u_3, u_3 \rangle =1.
\end{equation}
Equations \eqref{lie} and \eqref{glie} describe a one-parameter family of left-invariant Lorentzian metrics on $G=\widetilde{SL}(2,\mathbb R)$. These examples are still naturally reductive  \cite{CZ} but have not a structure similar to Lorentzian BCV spaces. Being naturally reductive, they are g.o. spaces (all their maximal geodesics are orbits of a one-parameter group of isometries, see for example \cite{CC}) and so, they are geodesically complete \cite{CWZ}.

\item[(iii)]  some {\em homogeneous plane waves},  for which the minimal polynomial of $L$ has $0$ as a triple root.   Homogeneous plane waves are a well known topic, whose study goes back to the work \cite{BL} in the framework of Theoretical Physics. These examples are not naturally reductive.
\end{itemize}

By definition,  a pseudo-Riemannian manifold $(M,g)$, together with a smooth vector field $X$, is a \emph{Ricci soliton} if
\begin{equation}\label{solit}
\mathcal{L}_Xg+\varrho=\lambda g,
\end{equation}
where $\mathcal{L}_X$ denotes the Lie derivative in the direction of $X$, $\varrho$ the Ricci tensor and $\lambda$ is a real number. A Ricci soliton is either \emph{shrinking}, \emph{steady} or \emph{expanding}, depending on whether $\lambda>0$, $\lambda=0$ or $\lambda<0$, respectively.

A Ricci soliton is said to be {\em gradient} if $X$ is the gradient, with respect to $g$, of some smooth function $f$ (called {\em potential}). In such a case, equation \eqref{solit} becomes
$$
\textit{Hess} (f)  + \varrho  =  \lambda g,
$$
where $\textit{Hess}$ denotes the Hessian. 

The Ricci soliton equation~\eqref{solit} is a special case of Einstein field equations. Moreover, Ricci solitons are the self-similar solutions of the {\em Ricci flow}.  They were first introduced in Riemannian settings  \cite{Ham}  and then extended to the pseudo-Riemannian case (in particular, to the Lorentzian one). We may refer to  the survey  \cite{Cao} for the Riemannian case, while some examples of  the study of  Ricci solitons in pseudo-Riemannian settings may be found in  \cite{AAC},\cite{BBGG},\cite{isr},\cite{Corr},\cite{Cosc}-\cite{CR},\cite{CZ1},\cite{Cas},\cite{Kh},\cite{PT} and references therein.

With respect to local coordinates, equation~\eqref{solit} yields an overdetermined system of second-order PDE, whose integration, when possible, provides a complete classification of solutions to the Ricci soliton equation  for the given class of metrics. 

In the case of a left-invariant Ricci soliton on a pseudo-Riemannian Lie group, the Ricci soliton equation \eqref{solit} translates into a system of algebraic equations. However, this appproach does not provide all solutions to the Ricci soliton equation. Left-invariant Ricci solitons for three-dimensional Lorentzian Lie groups were investigated in \cite{isr}. It may be observed that the curvature of a three-dimensional pseudo-Riemannian manifold is completely determined by its Ricci tensor, which plays a fundamental role in the Ricci soliton equation~\eqref{solit}.

We aim to obtain a complete classification of Ricci solitons for three-dimensional Lorentzian Lie groups, starting with the ones admitting a four-dimensional isometry group. Ricci solitons for three-dimensional plane waves (and, more in general, pp-waves) were investigated in \cite{BBGG}.  The contribution of the actual paper to this program is to solve completely the Ricci soliton equation for the exceptional examples described above by equations \eqref{lie} and \eqref{glie}.

 In order to do so, we shall first give an explicit description in global coordinates of the left-invariant Lorentzian metrics  determined by equations~\eqref{lie} and~\eqref{glie}. We then completely solve the Ricci soliton equation for such metrics. The main results are resumed in the following.

\begin{theorem}\label{main}
For any choice of the real parameter $\mu \neq 0$, the unimodular Lie group $G=\widetilde{SL}(2,\mathbb R)$, equipped with the left-invariant Lorentzian  metric corresponding to \eqref{glie},  is isometric to $\mathbb R^3$  equipped with the metric
\begin{equation}\label{g}
g=\frac{\varepsilon}{\mu} (e^{-2\mu x_3}-1) dx_1^2+2 dx_1 dx_2+2\mu x_2 dx_1dx_3+dx_3^2, \qquad \e =\pm 1.
\end{equation}
Every metric \eqref{g} is an expanding Ricci soliton,  satisfying the Ricci soliton equation~\eqref{solit} with $\lambda=-\frac 12 \mu ^2$ and $X=X^i \partial_i$, where
\begin{equation}\label{Xig1}
\left\{
\begin{array}{l}
X_1=  \frac{1}{\mu}   \left(k_1 e^{\sqrt{-\varepsilon \mu} x_1}+k_2 e^{-\sqrt{-\varepsilon \mu} x_1}+k_3 \right) , \\[7pt]
X_2=  - \frac{1}{\mu^2}   \left( k_1(\mu\sqrt{-\varepsilon\mu} x_2 -\varepsilon) e^{\sqrt{-\varepsilon \mu} x_1} -k_2(\mu\sqrt{-\varepsilon\mu} x_2 +\varepsilon) e^{-\sqrt{-\varepsilon \mu} x_1} \right) \\[3pt]
\qquad \quad+k_4 e^{-\mu x_3}, \\[7pt]
X_3=  \frac{\sqrt{-\varepsilon \mu} }{\mu^2}   \left(k_1 e^{\sqrt{-\varepsilon \mu} x_1}-k_2 e^{-\sqrt{-\varepsilon \mu} x_1} \right) -\frac\mu2
\end{array}
\right.
\end{equation}
if $\varepsilon\mu <0$, and
\begin{equation}\label{Xig2}
\left\{
\begin{array}{l}
X_1=  \frac{1}{\mu}   \left(k_1 \cos (\sqrt{\varepsilon \mu} x_1) +k_2 \sin (\sqrt{\varepsilon \mu} x_1) +k_3 \right) , \\[7pt]
X_2=  \frac{1}{\mu^2}   \left( k_1(\mu\sqrt{\varepsilon\mu} x_2  \sin (\sqrt{\varepsilon \mu} x_1)+\varepsilon \cos (\sqrt{\varepsilon \mu} x_1))  \right. \\[3pt]
\qquad \;\; \quad \quad \left. -k_2(\mu\sqrt{\varepsilon\mu} x_2  \cos (\sqrt{\varepsilon \mu} x_1)-\varepsilon \sin (\sqrt{\varepsilon \mu} x_1)) \right) +k_4 e^{-\mu x_3}, \\[7pt]
X_3=\frac{\sqrt{\varepsilon \mu} }{\mu^2} \left(-k_1 \sin (\sqrt{\varepsilon \mu} x_1) +k_2 \cos (\sqrt{\varepsilon \mu} x_1) \right) -\frac\mu2
\end{array}
\right.
\end{equation}
if $\varepsilon\mu >0$, for some real constants $k_1,k_2,k_3,k_4$. These Ricci solitons are not gradient.
\end{theorem}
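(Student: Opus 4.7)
\medskip

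\noindent\emph{Proof plan.} The statement splits naturally into three tasks: an explicit global-coordinate description of $(G,g)$; integration of the Ricci soliton equation; and verification that no solution in the resulting family is a gradient. To produce \eqref{g}, my plan is to exhibit a globally defined left-invariant frame $\{U_1,U_2,U_3\}$ on $\R^3$ realising \eqref{lie}--\eqref{glie}. Taking the natural choice $U_3=\partial_3$, the identity $[U_2,U_3]=\mu U_2$ forces $U_2=e^{-\mu x_3}\partial_2$; writing $U_1=f\partial_1+g\partial_2+h\partial_3$ and imposing the two remaining brackets reduces to a triangular chain of elementary first-order PDEs in $x_2,x_3$ admitting the simple solution $f=e^{\mu x_3}$, $h=-\mu x_2\, e^{\mu x_3}$, and an explicit $g$. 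Dualising, the invariant metric $2\theta^1\theta^2+(\theta^3)^2$ reproduces \eqref{g} verbatim. Since $\R^3$ is simply connected and the frame satisfies the defining brackets of $\widetilde{SL}(2,\R)$, this realises the claimed global isometry.

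Having \eqref{g}, I next compute the Christoffel symbols and the Ricci tensor $\varrho$, and set up the six independent equations $(\mathcal L_X g)_{ij}+\varrho_{ij}=\lambda g_{ij}$ for $X = X^i \partial_i$ with unknown functions $X^i(x_1,x_2,x_3)$. Tracing with $g^{-1}$ yields $\lambda=-\mu^2/2$. The $(2,2)$- and $(2,3)$-equations give $\partial_2 X^1=\partial_3 X^1 =0$, hence $X^1=X^1(x_1)$, and fix the $x_2$-dependence of $X^3$. The $(3,3)$-equation then determines $X^3$ up to a function of $x_1$. Substituting into the $(1,2)$- and $(1,3)$-equations yields a linear second-order ODE for $X^1(x_1)$ with characteristic equation $\zeta^2+\varepsilon\mu=0$, producing the hyperbolic profile \eqref{Xig1} when $\varepsilon\mu<0$ and the trigonometric profile \eqref{Xig2} when $\varepsilon\mu>0$. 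The remaining $(1,1)$-equation pins down $X^2$ and introduces the last free constant $k_4$ through the kernel term $e^{-\mu x_3}$.

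For the non-gradient conclusion, a gradient soliton would require the metric-dual 1-form $X^\flat$ with components $X^\flat_1=g_{11}X^1+X^2+\mu x_2 X^3$, $X^\flat_2=X^1$, $X^\flat_3=\mu x_2 X^1+X^3$ to be closed. Using \eqref{Xig1}--\eqref{Xig2} a direct computation shows that the $k_i$-dependent parts of $\partial_2 X^2$ and $\mu X^3$ cancel, leaving $\partial_2 X^\flat_1 = -\mu^2/2$. The closure condition $\partial_2 X^\flat_1=\partial_1 X^\flat_2=\partial_1 X^1$ thus forces $\partial_1 X^1\equiv -\mu^2/2$; but $\partial_1 X^1$ is a nonconstant combination of exponentials (respectively sinusoids) in $x_1$ unless $k_1=k_2=0$, in which case it vanishes identically, contradicting $-\mu^2/2\neq 0$. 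Hence no member of the family is a gradient.

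The main technical obstacle I anticipate lies in the second step: the metric coefficients depend nontrivially on $x_2$ and $x_3$, and the character of the reduced ODE in $x_1$ switches with the sign of $\varepsilon\mu$, so careful bookkeeping is needed to arrive cleanly at the two-regime formulas \eqref{Xig1}--\eqref{Xig2}.
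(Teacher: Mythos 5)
Your overall strategy coincides with the paper's: build an explicit frame realising \eqref{lie}--\eqref{glie} on $\R^3$ (the paper does this via the Maurer--Cartan form of the chart $\exp(x_1u_1)\exp(x_2u_2)\exp(x_3u_3)$, which produces exactly the frame $u_3=\partial_3$, $u_2=e^{-\mu x_3}\partial_2$, $u_1=e^{\mu x_3}\partial_1+\cdots$ that you propose to find by solving the bracket equations), then integrate the six-component system $(\mathcal L_Xg)_{ij}+\varrho_{ij}=\lambda g_{ij}$, and finally rule out the gradient case. Your non-gradient argument via non-closedness of $X^\flat$ (namely $\partial_2X^\flat_1=-\mu^2/2$ while $\partial_1X^\flat_2=\partial_1X^1$ is a non-constant function of $x_1$ unless $k_1=k_2=0$, in which case it vanishes) is correct and in fact somewhat cleaner than the paper's, which assumes $X=\mathrm{grad}_g f$ and integrates the resulting system to a contradiction.

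However, two of the intermediate deductions in your integration scheme are wrong as stated. First, tracing the soliton equation with $g^{-1}$ does \emph{not} yield $\lambda=-\mu^2/2$: since $\det g=-1$ and $\tau=g^{ij}\varrho_{ij}=-\tfrac32\mu^2$, the trace only gives $2\,\mathrm{div}X-\tfrac32\mu^2=3\lambda$, and $\mathrm{div}X$ is unknown at that stage (it does vanish for the final solution, but that is a conclusion, not something you may assume). In the paper $\lambda$ must be carried as an unknown and is pinned down only midway through the integration, when the coefficient of $x_2$ in the $(1,3)$-equation is forced to vanish for all $x_3$. Second, the $(2,2)$- and $(2,3)$-equations do not give $\partial_3X^1=0$: they give $\partial_2X^1=0$ and $\partial_3X^1+\partial_2X^3=0$, so $X^1$ may still depend on $x_3$ (with a compensating $x_2$-dependence in $X^3$); indeed the paper has to allow $X_1=\tfrac1\mu G_3(x_1)+e^{\mu x_3}G_1(x_1)$ and only later shows $G_1\equiv 0$. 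If you impose $X^1=X^1(x_1)$ from the outset you will still land on the correct family, but you will not have shown that it exhausts the solutions of \eqref{solit}, which is the point of the classification. (A smaller slip: the reduced ODE in $x_1$ is third order, $X_1'''+\varepsilon\mu X_1'=0$ --- whence the extra constant $k_3$ --- not second order.)
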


The paper is organized in the following way. In Section~2 we shall prove the first part of Theorem~\ref{main}, providing the description in global coordinates for the exceptional example in the above classification of three-dimensional Lorentzian Lie groups with a four-dimensional isometry group and the needed information concernig its Levi-Civita connection and curvature. In Section~3 we shall then complete the proof of  Theorem~\ref{main},  solving the Ricci soliton equation~\eqref{solit} for all such metrics and proving that they are non-gradient Ricci solitons.

\section{Explicit coordinates and geometry of the exceptional example}
\setcounter{equation}{0}

The explicit form \eqref{g} for the metric tensors described by \eqref{lie} and \eqref{glie}, together with the needed information concerning the Levi-Civita connection and curvature of this family of Lorentzian metrics, are given in the following result, which shall also allows further investigations about the gometry of the exceptional example.

\begin{theorem}\label{geomg}
For any choice of real parameter $\mu \neq 0$, the Lie group $G=\widetilde{SL}(2,\mathbb R)$, equipped with the left-invariant Lorentzian metric corresponding to
 \eqref{lie} and \eqref{glie},  is isometric to $\mathbb R^3$ equipped with the metric \eqref{g}. Moreover,

\medskip
a) The levi-Civita connection of the metric $g$ described in \eqref{g} is completely determined by by the following possibly non-vanishing terms:
\begin{equation}\label{nablagcoord}
\begin{array}{l}
\nabla_{\pa1}\pa1=-\varepsilon \mu x_2  e^{-2\mu x_3}\pa2+ \varepsilon  e^{-2\mu x_3}\pa3,
\\[7pt]
\nabla_{\pa1}\pa2=\nabla_{\pa2}\pa1=-\frac 12  \mu ^2 x_2 \pa2+ \frac 12  \mu \pa3,
\\[7pt]
\nabla_{\pa1}\pa3=\nabla_{\pa3}\pa1=-\frac 12  \mu  \pa1-  \frac 12  (\varepsilon e^{-2\mu x_3}+\varepsilon+\mu^3 x_2^2)  \pa2+ \frac 12  \mu^2 x_2 \pa3,
\\[7pt]
\nabla_{\pa2}\pa3=\nabla_{\pa3}\pa2=\frac 12  \mu \pa2.
\end{array}
\end{equation}
Here, and throughout the paper,      $(\partial {_1},\partial {_2},\partial {_3})=(\frac{\partial}{\partial x_1},\frac{\partial}{\partial x_2},\frac{\partial}{\partial x_3})$ will denote the basis of coordinate vector fields.

\medskip
b) The Riemann curvature tensor $R$ of $g$ of type $(0,4)$  is completely determined (up to symmetries) by the following possibly non-vanishing components $R_{ijkh}=R(\partial_i,\partial_j,\partial_k,\partial_h)$:
\begin{equation}\label{Rgcoord}
\begin{array}{ll}
R_{1212}=\frac{1}{4}\mu ^2 , \quad &
R_{1313}=\frac{1}{4}\mu (\mu^3 x_2^2-5\varepsilon e^{-2\mu x_3}+\varepsilon ) ,
\\[7pt]
R_{1213}=\frac{1}{4}\mu ^3 x_2 , \quad &
R_{1323}=-\frac{1}{4}\mu ^2.
\end{array}
\end{equation}

c) The Ricci tensor $\varrho$ of $g$  is completely determined with respect to $\left\{ \pa {_i}\right\}$ by the following symmetric matrix $(\varrho_{ij})=   (\varrho(\partial _i, \partial _j))$:
\begin{equation}\label{Riccigcoord}
(\varrho_{ij})=\left(
\begin{array}{ccc}
  -\frac{1}{2}\varepsilon \mu (3e^{-2\mu x_3}-1) &   -\frac{1}{2}\mu ^2
  &  -\frac{1}{2}\mu ^3 x_2
\\[7pt]
-\frac{1}{2}\mu ^2  &
0 & 0
\\[7pt]
 -\frac{1}{2}\mu ^3 x_2 & 0 &  -\frac{1}{2} \mu ^2
\end{array}
\right).
\end{equation}
\end{theorem}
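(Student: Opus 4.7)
The strategy has two distinct phases. Phase one derives the explicit coordinate form \eqref{g}; phase two reduces parts (a)--(c) to routine tensor calculus on the resulting metric.

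For the global coordinates, the natural choice is exponential coordinates of the second kind,
\[
\phi:\mathbb R^3\to\widetilde{SL}(2,\mathbb R),\qquad \phi(x_1,x_2,x_3)=\exp(x_1u_1)\exp(x_2u_2)\exp(x_3u_3),
\]
and the plan is to pull back the left Maurer--Cartan form and then substitute into the bilinear form dictated by \eqref{glie}. For this ordering the dual left-invariant coframe $\{\theta^i\}$ is
\[
\theta^iu_i=\exp(-x_3\ad_{u_3})\exp(-x_2\ad_{u_2})(u_1)\,dx_1+\exp(-x_3\ad_{u_3})(u_2)\,dx_2+u_3\,dx_3.
\]
From \eqref{lie} one sees that $\ad_{u_3}$ is semisimple with eigenvalues $\pm\mu,0$ (so $\exp(-x_3\ad_{u_3})$ sums in closed form, producing hyperbolic functions of $\mu x_3$) and $\ad_{u_2}$ is two-step nilpotent on $u_1$ (so $\exp(-x_2\ad_{u_2})$ is a quadratic polynomial in $x_2$). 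Substituting the resulting $\theta^i$ into $g=2\theta^1\theta^2+(\theta^3)^2$ and applying the identity $e^{-\mu x_3}\sinh(\mu x_3)=\tfrac12(1-e^{-2\mu x_3})$, the $dx_1^2$ coefficient collapses to exactly $(\varepsilon/\mu)(e^{-2\mu x_3}-1)$, producing \eqref{g}. Since $\phi$ is smooth with invertible differential at every point and both source and target are simply connected three-manifolds, $\phi$ is a global diffeomorphism, hence an isometry.

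Once \eqref{g} is established, parts (a)--(c) are mechanical. The coordinate matrix of $g$ has determinant $-1$, so $(g^{ij})$ is read off directly. Only $g_{11}$ depends on $x_3$ and only $g_{13}$ depends on $x_2$, so most partial derivatives entering the Koszul formula vanish, and the Christoffel symbols collapse to the short list \eqref{nablagcoord}. For \eqref{Rgcoord}, since coordinate fields commute one has $R(\partial_i,\partial_j)\partial_k=\nabla_{\partial_i}\nabla_{\partial_j}\partial_k-\nabla_{\partial_j}\nabla_{\partial_i}\partial_k$, and the symmetries of $R$ leave only the four components $R_{1212}, R_{1213}, R_{1313}, R_{1323}$ to compute independently. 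Finally \eqref{Riccigcoord} follows by contraction $\varrho_{jk}=g^{ih}R_{ijkh}$.

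The main obstacle is the Maurer--Cartan calculation in phase one: one must correctly track the hyperbolic-function contribution from the semisimple part of $\ad_{u_3}$, the polynomial contribution from the nilpotent $\ad_{u_2}$, and the $\mu^2x_2^2\,dx_1^2$ term coming from $(\theta^3)^2$, and verify that these combine to the simple expression \eqref{g}. The subsequent curvature and Ricci computations are straightforward symbolic differentiation, much simplified by the sparsity of $g_{ij}$.
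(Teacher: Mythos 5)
Your proposal follows essentially the same route as the paper: exponential coordinates of the second kind $(x_1,x_2,x_3)\mapsto\exp(x_1u_1)\exp(x_2u_2)\exp(x_3u_3)$, computation of the dual coframe $\{\theta_i\}$ from the pulled-back Maurer--Cartan form via the adjoint series (the paper sums the series term by term, you organize it through the semisimple/nilpotent structure of $\ad_{u_3}$ and $\ad_{u_2}$, which is the same computation), substitution into $g=2\theta_1\theta_2+\theta_3^2$ to get \eqref{g}, and then direct coordinate computation of \eqref{nablagcoord}, \eqref{Rgcoord} and \eqref{Riccigcoord}. One caveat: your justification that the chart is global --- ``a local diffeomorphism between simply connected manifolds is a global diffeomorphism'' --- is not a valid implication in general (a local diffeomorphism need not be a covering map without some properness or completeness input); one would instead invoke an Iwasawa-type decomposition of $\widetilde{SL}(2,\mathbb R)$ or the geodesic completeness of these naturally reductive metrics. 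Note, however, that the paper itself only asserts the existence of a local chart in its proof, so on this point you are no worse off than the original argument.
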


\begin{proof}
Consider the three-dimensional Lorentzian Lie group $(G=\widetilde{SL}(2,\mathbb R),g)$ described at the Lie algebra level by \eqref{lie} and \eqref{glie}. Then, there exists (at least, locally) some chart
$$
\begin{array}{rcl}
\sigma:  U\subseteq \mathbb R^3 & \to & G \\[3pt]
(x_1,x_2,x_3) & \mapsto & \exp(x_1u_1)\exp(x_2u_2)\exp(x_3u_3).
\end{array}
$$
With respect to coordinates $(x_1,x_2,x_3)$, the {\em Maurer-Cartan forms} are the components of $\sigma^{-1}d\sigma$, derived from the Lie algebra $\mathfrak g$ of $G$ by means of the relation
{\small
$$
\begin{array}{rl}
\sigma^{-1}d\sigma=&\exp(-x_3u_3)\exp(-x_2u_2)\exp(-x_1u_1)\cdot dx_1\cdot u_1\exp(x_1u_1)\exp(x_2u_2)\exp(x_3u_3)\\
&+\exp(-x_3u_3)\exp(-x_2u_2)\cdot dx_2\cdot u_2\exp(x_2u_2)\exp(x_3u_3)\\
&+\exp(-x_3u_3)\cdot dx_3\cdot u_3\exp(x_3u_3),
\end{array}
$$}
where
$$
\exp(-x_iu_i)u_j\exp(x_iu_i)=u_j-x_i[u_i,u_j]+\frac{x_i^2}{2!}[u_i,[u_i,u_j]]-\cdots .
$$
Explicitly, for the Lie algebra  \eqref{lie} equipped  with the left-invariant metric  \eqref{glie}, we obtain

$$
\begin{array}{l}
\exp(-x_3u_3)\exp(-x_2u_2)\exp(-x_1u_1)\cdot dx_1\cdot u_1\exp(x_1u_1)\exp(x_2u_2)\exp(x_3u_3)\\[7pt]
\quad=\exp(-x_3u_3)\exp(-x_2u_2)\cdot dx_1\cdot u_1\exp(x_2u_2)\exp(x_3u_3)\\[5pt]
\quad=\exp(-x_3u_3)\cdot dx_1\cdot \left(u_1+x_2\mu u_3-\frac{x_2^2}{2}\mu^2 u_2\right)\exp(x_3u_3)\\[7pt]
\quad=\big(u_1+x_2\mu u_3-\frac{x_2^2}{2}\mu^2 u_2-x_3(\mu u_1+(\frac12x_2^2\mu^3+\varepsilon)u_2)+\frac{x_3^2}{2!}(\mu^2 u_1-\frac12x_2^2\mu^4 u_2)\\[5pt]
\quad\quad-\frac{x_3^3}{3!}(\mu^3 u_1+(\frac12x_2^2\mu^5+\mu^2\varepsilon)u_2)+\cdots\big)dx_1\\[5pt]
\quad=\big(e^{-x_3\mu}dx_1\big)u_1-\big((\frac{x_2^2}{2}\mu^2e^{x_3\mu}+\frac{\varepsilon}{\mu}\sinh(x_3\mu))dx_1\big)u_2+\big(x_2\mu dx_1\big)u_3,\\[15pt]
\exp(-x_3u_3)\exp(-x_2u_2)\cdot dx_2\cdot u_2 \exp(x_2u_2)\exp(x_3u_3)\\[5pt]
\quad=\exp(-x_3u_3)\cdot dx_2\cdot u_2\exp(x_3u_3)\\[5pt]
\quad=\big(u_2+x_3\mu u_2+\frac{x_3^2}{2!}\mu^2 u_2+\frac{x_3^2}{3!}\mu^3 u_2+\cdots\big)dx_2=\big(e^{x_3\mu}dx_2\big)u_2,\\[15pt]
\exp(-x_3u_3)\cdot dx_3\cdot u_3\exp(x_3u_3)=dx_3u_3.
\end{array}
$$
From the above equations  we then explictly get the basis $\{ \theta_i \}$ of $1$-forms dual to the pseudo-orthonormal basis $\{u_i \}$ used in \eqref{glie} in local coordinates, that is,
$$
\left\{\begin{array}{l}
\theta_1=e^{-\mu x_3}dx_1,\\[5pt]
\theta_2={-\frac{1}{2\mu} \left( (\mu^3 {x_2^2}+\varepsilon ) e^{\mu x_3} -\e  e^{-\mu x_3})\right)dx_1}+e^{\mu x_3}dx_2,\\[5pt]
\theta_3=x_2\mu dx_1+dx_3.
\end{array}
\right.
$$
Consequently, we obtain the following explicit form of the pseudo-orthonormal basis $\{u_i \}$ with respect to the coordinate system $(x_1,x_2,x_3)$:
\begin{equation}\label{ui}
\left\{\begin{array}{l}
u_1=e^{\mu x_3} \pa1+{\frac{1}{2\mu} \left((\mu^3 {x_2^2}+\varepsilon ) e^{\mu x_3} -\e e^{-\mu x_3})\right)\pa2}
-\mu x_2 e^{\mu x_3} \pa3,\\[5pt]
u_2=e^{-\mu x_3} \pa2,\\[5pt]
u_3=\pa3.
\end{array}
\right.
\end{equation}
It is easy to check that the vector fields described by \eqref{ui} satisfy the bracket relations \eqref{lie}. The explicit form \eqref{g} of the left-invariant Lorentzian metric is then obtained applying  \eqref{glie}  to the explicit expressions \eqref{ui} of the basis of the Lie algebra (equivalently, substituting the explicit forms of $\theta _i$ into $g=\theta_1\theta_2+\theta_3^2$).

The proof of statements {\em a),b),c)}, which we shall omit, then follows directly from \eqref{g} via some standard calculations, applying the well know formulas for the local components of the Levi-Civita connection, the curvature tensor and the Ricci tensor.

\end{proof}

\begin{rem}\label{notEinstein}
{\em We observe that by \eqref{g} and \eqref{Riccigcoord} it easily follows that the metric $g$ is never Einstein. 
}
\end{rem}

\section{The Ricci soliton equation for the exceptional example}
\setcounter{equation}{0}

With respect to the coordinate system $(x_1,x_2,x_3)$ used in the previous Section, let now $X=X_i \pa i$ denote an arbitrary vector field on
$\widetilde{SL}(2,\mathbb R)$, where $X_i=X_i(x_1,x_2,x_3), \ i=1,2,3$, are some smooth functions. The Lie derivative
$\mathcal{L}_X g$ of the metric tensor is completely determined by $(\mathcal{L}_X g){(\pa i ,\pa j)}$, for all indices $i \leq j$. Explicitly, starting from \eqref{g},  we find
\begin{equation}\label{Lieg}
\begin{array}{l}
(\mathcal L _X g)_{11}=\frac{2}{\mu} \left(\varepsilon (e^{-2\mu x_3}-1) \pa1 X_1+\mu \pa1 X_2 +\mu^2x_2 \pa1 X_3-\varepsilon \mu  e^{-2\mu x_3}X_3  \right),
\\[7pt]
(\mathcal L _X g)_{12}= \frac{1}{\mu} \left(\mu  \pa1 X_1 +\varepsilon (e^{-2\mu x_3}-1) \pa2 X_1+\mu \pa2 X_2 +\mu^2x_2 \pa2 X_3  \right),
\\[7pt]
(\mathcal L _X g)_{13}= \mu x_2  \pa1 X_1 +\frac{\varepsilon}{\mu}  (e^{-2\mu x_3}-1) \pa3 X_1+\mu X_2 + \pa3 X_2 +\pa1 X_3 +\mu x_2 \pa3 X_3,
\\[7pt]
(\mathcal L _X g)_{22}= 2\pa2 X_1 ,
\\[7pt]
(\mathcal L _X g)_{23}= \mu x_2  \pa2 X_1 + \pa3 X_1+ \pa2 X_3,
\\[7pt]
(\mathcal L _X g)_{33} = 2 \mu x_2 \pa3 X_1+2\pa3 X_3.
\end{array}
\end{equation}
Using the above components of $\mathcal{L}_X g$ and  \eqref{g}, \eqref{Riccigcoord}, we obtain that $g$, a real constant $\lambda$ and the smooth vector field $X$ satisfy  the Ricci soliton equation \eqref{solit} if and only if the following system of six PDE is satisfied:
\begin{equation}\label{RSg}
\left\{
\begin{array}{l}
\frac{1}{2\mu} \left(4\varepsilon (e^{-2\mu x_3}-1) \pa1 X_1+4\mu \pa1 X_2 +4\mu^2x_2 \pa1 X_3-4\varepsilon \mu  e^{-2\mu x_3}X_3 \right. \\[3pt]
\left. \qquad -3\varepsilon\mu^2e^{-2\mu x_3}+\varepsilon \mu^2+2\varepsilon \lambda (e^{-2\mu x_3}-1)   \right)=0,
\\[7pt]
 \frac{1}{2\mu} \left(2\mu  \pa1 X_1 +2\varepsilon (e^{-2\mu x_3}-1) \pa2 X_1+2\mu \pa2 X_2 +2\mu^2x_2 \pa2 X_3-\mu (\mu^2+2\lambda)  \right)=0,
\\[7pt]
\frac{1}{2\mu} \left(2\mu^2 x_2  \pa1 X_1 +2\varepsilon (e^{-2\mu x_3}-1) \pa3 X_1+2\mu^2 X_2 +2\mu  \pa3 X_2 +2\mu \pa1 X_3 \right. \\[3pt] \left. \qquad +2\mu^2x_2 \pa3 X_3 -\mu^2 (\mu^2+2\lambda)x_2 \right)=0,
\\[7pt]
2\pa2 X_1 =0,
\\[7pt]
 \mu x_2  \pa2 X_1 + \pa3 X_1+ \pa2 X_3=0,
\\[7pt]
2 \mu x_2 \pa3 X_1+2\pa3 X_3 -\frac 12 \mu^2-\lambda=0.
\end{array}
\right.
\end{equation}
We integrate the fourth equation of~\eqref{RSg} and we get
\begin{equation}\label{gX1}
\begin{array}{l}
X_1 =  {F_1} \left( x_{{1}},x_{{3}} \right),
\end{array}
\end{equation}
for some smooth  function $F_1$. We then substitute from \eqref{gX1} into the last equation of~\eqref{RSg} and by integration we get
\begin{equation}\label{gX3}
\begin{array}{l}
X_3 =    -\mu x_2 F_1(x_1,x_3) + \frac{1}{4}  (\mu^2+\lambda ) x_3 +{F_3} \left( x_{{1}},x_{{2}} \right),
\end{array}
\end{equation}
for some smooth  function $F_3$. Using \eqref{gX1} and  \eqref{gX3} in the second equation and integrating, we find
\begin{equation}\label{gX2}
\begin{array}{l}
X_2 = \frac 12 \mu^2 x_2^2 F_1  - \mu x_2 {F_3} +\mu \int {F_3} dx_2-\left(\pa1 F_1-\frac 12 \mu^2-\lambda \right)x_2     +{F_2} \left( x_{{1}},x_{{3}} \right),
\end{array}
\end{equation}
where $F_2$ is a smooth fuction.  The fifth equation of \eqref{RSg} now reads
\begin{equation} \label{fifth}
\begin{array}{l}
\pa3 F_1(x_1,x_3)-\mu F_1(x_1,x_3) +\pa2 F_3(x_1,x_2)=0.
\end{array}
\end{equation}
We differentiate the above equation with respect to $x_2$ and we find $\partial_{22}^2 F_3=0$, whose integration yields
\begin{equation} \label{F3}
\begin{array}{l}
F_3(x_1,x_2) =G_3(x_1)x_2+H_3(x_1),
\end{array}
\end{equation}
where $G_3,H_3$ are some smooth functions. Equation \eqref{fifth} now reads
$$
\begin{array}{l}
\pa3 F_1(x_1,x_3)-\mu F_1(x_1,x_3) +G_3(x_1)=0
\end{array}
$$
whose integral is given by
\begin{equation} \label{F1}
\begin{array}{l}
F_1(x_1,x_3)= \frac{1}{\mu} G_3(x_1) +e^{\mu x_3}G_1(x_1),
\end{array}
\end{equation}
where $G_1$ is a smooth function. Moreover, substituting from \eqref{F3}, the expression \eqref{gX2} of $X_2$ now simplifies as follows:
\begin{equation}\label{gX2'}
\begin{array}{l}
X_2 = \frac{1}{ 2\mu} \left(  \mu^3 x_2^2 e^{\mu x_3} G_1(x_1)  -2 \mu x_2 e^{2\mu x_3} G_1' (x_1) -2 x_2 G_3' (x_1)
+\mu(\mu^2  +2 \lambda )x_2 \right)   \\[3pt]
\qquad +{F_2} \left( x_{{1}},x_{{3}} \right).
\end{array}
\end{equation}
After we substitute from the above equations, the third equation in \eqref{RSg}, written as a polynomial equation in the variable $x_2$, reads
\begin{equation}\label{third}
\begin{array}{l}
\left\{-2\mu e^{\mu x_3}G_1' (x_1)+\frac 14 \mu (2\mu^2+\lambda)\right\} x_2 \\[3pt]
+ \left\{ \mu F_2(x_1,x_3)+\pa3 F_2(x_1,x_3) +H'_3 (x_1) +\e (e^{-\mu x_3}-e^{\mu x_3}) G_1(x_1)\right\}=0.
\end{array}
\end{equation}
The above equation \eqref{third} must hold for all values of $x_2$. Therefore, in particular it implies
$$
-2\mu e^{\mu x_3}G_1' (x_1)+\frac 14 \mu (2\mu^2+\lambda)=0,
$$
for all values of $x_3$, whence,

\begin{equation}\label{lambdaG1}
\begin{array}{l}
G_1 (x_1) =   c_1 , \qquad  \lambda = -\frac 12 \mu^2 <0,
\end{array}
\end{equation}
for some real constant $c_1$.  Consequently, equation \eqref{third} reduces to
\begin{equation}\label{third1}
\begin{array}{l}
\mu F_2(x_1,x_3)+\pa3 F_2(x_1,x_3) +H'_3 (x_1) +\e (e^{-\mu x_3}-e^{\mu x_3}) c_1=0,
\end{array}
\end{equation}
which, by integration, yields
\begin{equation}\label{F2}
\begin{array}{l}
F_2  (x_1,x_3) = \left( -\e c_1 x_3+\frac{\e}{2\mu}c_1 e^{2\mu x_3} -\frac{1}{\mu} e^{\mu x_3}H'_3(x_1)+G_2(x_1)  \right)e^{-\mu x_3} ,
\end{array}
\end{equation}
where $G_2$ is a smooth function. We are now left with the first equation in \eqref{RSg}, which, written as a polynomial in $x_2$, reads
\begin{equation}\label{first}
\begin{array}{l}
-\frac{2}{\mu} \left(   G''_3(x_1)-\mu^2  H'_3(x_1)-\e \mu^2 c_1 e^{-\mu x_3} \right) x_2\\[5pt]
-\frac{1}{\mu} \left\{ (2\e \mu^2 H_3(x_1)-2\e G'_3(x_1)+\e \mu ^3  )e^{-2\mu x_3}\right\}\\[5pt]
-\frac{1}{\mu}	\left\{2 \mu^2 G'_2(x_1)e^{-\mu x_3}-2\mu H''_3(x_1)- 2\e G'_3(x_1) \right\} =0,
\end{array}
\end{equation}
From \eqref{first}, which must be satisfied for all values of $x_2$, in particular we get 
$$
G''_3(x_1)-\mu^2  H'_3(x_1)-\e \mu^2 c_1 e^{-\mu x_3}=0,
$$
for all values of $x_3$. Therefore, $c_1=0$ and
\begin{equation}\label{H3}
\begin{array}{l}
H_3  (x_1) =\frac{1}{\mu^2} G'_3 (x_1)+c_2 ,
\end{array}
\end{equation}
where $c_2$ is a real constant. Equation \eqref{first} then reduces to
\begin{equation}\label{first1}
\begin{array}{l}
\e \mu^3 (2  c_2+\mu  )e^{-2\mu x_3}-2 \mu^3 G'_2(x_1)e^{-\mu x_3}+2(G'''_3(x_1)+\e \mu G'_3(x_1)) =0,
\end{array}
\end{equation}
which must hold for all values of $x_3$ So, it  yields at once $c_2=-\frac{\mu}{2}$ and $G_2 (x_1)=k_4$ is a real constant. At this point, \eqref{first1} reduces to
$$
G'''_3(x_1)+\e \mu G'_3(x_1)=0,
$$
whose general integral is the following:
\begin{equation}\label{G3}
G_3  (x_1) =
\left\{
\begin{array}{ll}
k_1 e^{\sqrt{-\varepsilon \mu} x_1}+k_2 e^{-\sqrt{-\varepsilon \mu} x_1}+k_3 & \text{if} \;\, \e\mu <0, \\[5pt]
k_1 \cos (\sqrt{\varepsilon \mu} x_1) +k_2 \sin (\sqrt{\varepsilon \mu} x_1) +k_3 & \text{if} \;\, \e\mu> 0.
\end{array}
\right.
\end{equation}
Substituting the above expressions into $X_1,X_2,X_3$, we find  either \eqref{Xig1} or \eqref{Xig2}, depending on whether $\e \mu <0$ or $\e \mu >0$, respectively. All equations in \eqref{RSg} are now satisfied. In fact, as a further check, using \eqref{g},\eqref{Riccigcoord},\eqref{Lieg} and either \eqref{Xig1} or \eqref{Xig2}, it is easily seen that
$(\mathcal{L}_X g)_{ij}=-\frac 12 \mu^2 g_{ij}-\varrho_{ij}$, for all indices $i,j=1,2,3$.

\begin{rem}
{\em
We explicitly observe that the expressions of $X_i$ given in \eqref{Xig1} and in \eqref{Xig2} depend on four arbitrary real constants $k_1,k_2,k_3,k_4$. This reflects the fact that the Lie algebra of Killing vector fields of $(\widetilde{SL}(2,\mathbb R), g)$ is four-dimensional, coherently to the fact that the isometry algebra of $(\widetilde{SL}(2,\mathbb R), g)$ has dimension four.
}
\end{rem}

By Remark \ref{notEinstein},  the above Ricci solitons are not trivial, because the metric \eqref{g} is never Einstein.  In order to complete the proof of Theorem \ref{main}, we shall now prove that these Ricci solitons are not gradient. We shall give the full details for the case where $\e\mu < 0$, the case $\e \mu >0$ can be treated in a completely analogous way.

So, consider a smooth vector field $X =X_i \partial_i$, with $X_i$ described by \eqref{Xig1}, and suppose that such a vector field is gradient, that is, there exists some smooth function $f=f(x_1,x_2,x_3)$, such that $X={\rm grad} _g (f)$. Then, by \eqref{g} and \eqref{Xig1}, the function $f$ must satisfy the following system of PDE:
\begin{equation}\label{GRS}
\left\{
\begin{array}{l}
  k_1 e^{\sqrt{-\varepsilon \mu} x_1}+k_2 e^{-\sqrt{-\varepsilon \mu} x_1}+k_3 = \mu\pa2 f ,\\[7pt]
  - \frac{1}{\mu^2}   \left( k_1(\mu\sqrt{-\varepsilon\mu} x_2 -\varepsilon) e^{\sqrt{-\varepsilon \mu} x_1} -k_2(\mu\sqrt{-\varepsilon\mu} x_2 +\varepsilon) e^{-\sqrt{-\varepsilon \mu} x_1} \right) +k_4 e^{-\mu x_3}  \\[5pt]
  =    \pa1 f-\frac{\e}{\mu} (e^{-2\mu x_3}+1)  \pa2 f+\mu^2 x_2^2  \pa2 f-\mu x_2 \pa3 f,\\[3pt]
 \sqrt{-\varepsilon \mu}    \left(k_1 e^{\sqrt{-\varepsilon \mu} x_1}-k_2 e^{-\sqrt{-\varepsilon \mu} x_1} \right) ={\mu^2} \left( \mu \pa2 f-\pa3 f-\frac{\mu}{2}\right).
\end{array}
\right.
\end{equation}
Integrating the first equation in \eqref{GRS}  we get
\begin{equation}\label{effe}
\begin{array}{l}
f(x_1,x_2,x_3) = \frac{1}{\mu} \left(  k_1 e^{\sqrt{-\varepsilon \mu} x_1}+k_2 e^{-\sqrt{-\varepsilon \mu} x_1}+k_3 \right)x_2 +Q(x_1,x_3) ,
\end{array}
\end{equation}
where $Q$ denotes an arbitrary smooth function. We substitute from \eqref{effe} into the third equation of \eqref{GRS} and we obtain
\begin{equation}\label{GRSIII}
\begin{array}{l}
k_1 (\mu^2 x_2+ \sqrt{-\varepsilon \mu}) e^{\sqrt{-\varepsilon \mu} x_1}  +k_2 (\mu^2 x_2 -\sqrt{-\varepsilon \mu}) e^{-\sqrt{-\varepsilon \mu} x_1}\\[3pt]
+{\mu^2} \left(k_3 x_2 -\pa3 Q - \frac{\mu}{2} \right)=0.
\end{array}
\end{equation}
As the above equation must be satisfied for all values of $x_1$ and $x_2$, it yields at once $k_1=k_2=k_3=0$ and $\pa3 Q = \frac{\mu}{2}$, whence, by integration, we find
\begin{equation}\label{qu}
\begin{array}{l}
Q(x_1,x_3)= -\frac{1}{2} \mu x_3 +R(x_1) ,
\end{array}
\end{equation}
for some smooth function $R$. Using \eqref{qu} and $k_1=k_2=k_3=0$, the second equation of \eqref{GRS} reduces to
\begin{equation}\label{GRSII}
\begin{array}{l}
R'(x_1)+\frac{1}{2}\mu^2 x_2=0,
\end{array}
\end{equation}
which cannot hold for all values of $x_2$, as $\mu \neq 0$. Therefore, the Ricci soliton is never gradient.

\end{document}